\newtheorem{mtheorem}{Theorem}
\newtheorem{theorem}{Theorem}[section]
\newtheorem{lemma}[theorem]{Lemma}
\newtheorem{prop}[theorem]{Proposition}
\newtheorem{coro}[theorem]{Corollary}
\theoremstyle{definition}
\newtheorem{remark}[theorem]{Remark}
\newcommand{\vep}{\varepsilon}
\newcommand{\psh}{{\rm PSH}}
\newcommand{\PSH}{{\rm PSH}}
\newcommand{\ddbar}{\partial\bar\partial}
\newcommand{\Amp}{{\rm Amp}}
\newcommand{\R} {{\bf  R}}
\title {Monge-Amp\`{e}re measures on contact sets}
\author{ Eleonora Di Nezza \& Stefano Trapani }
\date{\vspace{-0.5cm}}
\begin{document}

\maketitle

\begin{abstract}
\noindent Let $(X, \omega)$ be a compact K\"ahler manifold of complex dimension $n$ and $\theta$ be a smooth closed real $(1,1)$-form on $X$ such that its cohomology class $\{ \theta \}\in H^{1,1}(X, \R)$ is  pseudoeffective. 
Let $\varphi$ be a $\theta$-psh function, and let $f$ be a continuous function on $X$ with bounded distributional laplacian with respect to $\omega$ such that $\varphi \leq f. $
 Then the non-pluripolar  measure  
$\theta_\varphi^n:= (\theta + dd^c \varphi)^n$ satisfies the equality: 
$$ {\bf{1}}_{\{ \varphi = f \}} \ \theta_\varphi^n =   {\bf{1}}_{\{ \varphi = f \}} \ \theta_f^n,$$ 
where, for a subset $T\subseteq X$, ${\bf{1}}_T$ is the characteristic function. 
In particular we prove that 
\[ \theta_{P_{\theta}(f)}^n= { \bf {1}}_{\{P_{\theta}(f) = f\}} \   \theta_f^n\qquad {\rm and }\qquad \theta_{P_\theta[\varphi](f)}^n = { \bf {1}}_{\{P_\theta[\varphi](f) = f \}} \  \theta_f^n. \] 
\end{abstract}

\section{Introduction}
Starting from the works of Zaharjuta\cite{Z76} and Siciak \cite{S77}, that years later have been take over by Bedford and Taylor \cite{BT,BT82,BT86}, \emph{envelopes of plurisubharmonic functions} started to be of interest and to play an important role in the development of the pluripotential theory on domains of $\mathbb{C}^n$. 

When, relying on the Bedford and Taylor theory in the local case, the foundations of a pluripotential theory on compact  K\"ahler manifolds has been developed \cite{GZ05, GZ07}, \emph{envelopes of quasi-plurisubharmonic functions} started to be intensively studied.

As geometric motivations we can mention, among others, the study of geodesics in the space of K\"ahler metrics \cite{Chen00, Dar17, Ber17, RWN, CTW18, DDL1, CMc19} and the trascendental holomorphic Morse inequalities on projective manifolds \cite{WN}.

The two basic (and related) questions are about the regularity of envelopes and the behaviour of their Monge-Amp\`ere measures. 
To fix notations, let $(X,\omega)$ be a compact K\"ahler manifold of complex dimension $n$, $\theta$ be a smooth closed real $(1,1)$-form and let $f$ be a function on $X$ bounded from above. We are going to refer to $f$ as ``barrier function". Then the ``prototype" of an envelope construction is 
$$ P_\theta(f):= \left(\{ u\in \PSH(X, \theta), \; u\leq f \}\right)^*.$$ 
Such a function is either a genuine $\theta$-plurisubharmonic function or identically $-\infty$. 
When $ f= -{\bf 1}_T$ is the negative characteristic function of a subset $T$, then $P_\theta (f) = f^*_T$ is the so called relative extremal function of $T$ \cite{GZ05}.
When $f = 0$ then $P_\theta(0)= V_\theta$ is a distiguished potential with minimal singularities. 
\\The study of such envelopes has lead to several works. We start summarizing them in the case of a smooth barrier function $f$.\\
The first result to mention is \cite{Ber09} where the author proves that in the case $\theta\in c_1(L)$ where $L$ is a big line bundle over $X$, the envelope $P_\theta(f)$ is  ${C}^{1,1}$ on $ \Amp(\{\theta\})$ and moreover 

\begin{equation}\label{support}
\theta_{P_\theta(f)}^n= {\bf 1}_{\{P_\theta(f)=f\}} \theta_f^n.
\end{equation}
After \cite{BD12}, people started to work on possible generalisations of the above results in the case of a pseudoeffective class $\{\theta\}$, that does not necesseraly represents the first Chern class of a line bundle. If we assume $\{\theta\}$ big and nef, Berman \cite{Ber13}, using PDE methods, proved that the envelope $P_\theta(f)$ is in $C^{1,\alpha}$ on $\Amp(\{\theta\})$ for any $\alpha\in (0,1)$ and the identity in \eqref{support} holds. The optimal regularity $C^{1,1}$ in the K\"ahler case was then proved indipendently by \cite{T18} and \cite{CZ19} while the big and nef case was settled in \cite{CTW18}.\\
For general pseudoeffective classes only the following inequality of measures is known \cite{DDL1}:
\begin{equation}\label{support1}
\theta_{P_\theta(f)}^n\leq {\bf 1}_{\{P_\theta(f)=f\}} \theta_f^n.
\end{equation}

In the study of geodesics, \cite{RWN} introduced another type of envelope: given a $\theta$-plurisubharmonic function $\varphi$, the so called \emph{maximal envelope} is defined as
$$P_\theta[\varphi](f):=\left(\lim_{C\rightarrow+\infty} P_\theta(\min(\varphi+C, f))\right)^*.$$ 
In the same paper, under the assumption $\{\theta\}=c_1(L)$, they proved the equality
\begin{equation}\label{support2}
\theta_{P_\theta[\varphi](f)}^n= {\bf 1}_{\{P_\theta[\varphi](f)=f\}} \theta_f^n.
\end{equation}
In the case of general pseudoeffective classes, the inequality $\leq $ in \eqref{support2} was proved in \cite{DDL2}, whereas the equality is derived in \cite{Mc} when $\varphi$ has analytic singularities.

In the literature, regularity questions about envelopes for functions $f$ that are less regular have been also adressed: in the case $\theta$ is a K\"ahler form, Darvas and Rubinstein \cite{DR} proved that if $f$ is $C^1$/$C^{1, \overline{1}}$, its envelope is $C^1$/$C^{1, \overline{1}}$ as well; while Guedj, Lu and Zeriahi \cite{GLZ17} proved that if $f$ is a continuous function, $P_\theta(f)$ is also continuous. They also proved that its Monge-Amp\`ere measure (w.r.t. any big class $\{\theta\}$) is supported on the contact set $\{ P_\theta (f)=f\} $. 
%Nevertheless, the identity in \eqref{support} is not known in this generality. 

In the present paper we prove the following theorem:

\begin{mtheorem}
Let $ \theta$ be  smooth closed real $(1,1)$-form on $X$ such that  the cohomology class $ \{ \theta \}$ is  pseudoeffective.  Let $\varphi$ be a $\theta$-plurisubharmonic function and $f\in C^{1, \overline{1}}$(X). Then the non-pluripolar product  $\theta_{ \varphi}^n$ satisfies the equality

$${\bf 1}_{\{ \varphi = f \} } \theta^n_{ \varphi} ={\bf 1}_{\{ \varphi= f \} } \theta^n_{ f}.
$$
\end{mtheorem}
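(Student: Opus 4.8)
The plan is to reduce the global equality to a local statement and then exploit the regularity of $f$. The inequality ${\bf 1}_{\{\varphi=f\}}\theta^n_\varphi\le{\bf 1}_{\{\varphi=f\}}\theta^n_f$ follows from a standard comparison-of-pluripotential-type argument, since on the contact set $\varphi$ agrees with the more regular $f$; indeed this type of bound already underlies \eqref{support1} and \eqref{support2}. So the real content is the reverse inequality
\[
{\bf 1}_{\{\varphi=f\}}\theta^n_\varphi\ \ge\ {\bf 1}_{\{\varphi=f\}}\theta^n_f.
\]
First I would reduce to the case where $\varphi$ has small unbounded locus, or even is bounded, by a standard approximation/truncation: replace $\varphi$ by $\max(\varphi,V_\theta-C)$ or work on the set where $\varphi$ is locally bounded, using that non-pluripolar products are local in the plurifine topology and that the contact set $\{\varphi=f\}$ is contained in $\{\varphi>-\infty\}$ (as $f$ is continuous, hence finite). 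On $\{\varphi=f\}$ the function $\varphi$ is squeezed between a $\theta$-psh function and the $C^{1,\overline1}$ function $f$, so it inherits a one-sided second-order control there.

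The heart of the argument is a \emph{local} Monge-Amp\`ere comparison on contact sets in $\mathbb C^n$: if $u$ is psh and $h$ is, say, $C^{1,\overline1}$ (so $dd^c h$ is an $L^\infty$ form) with $u\le h$, then ${\bf 1}_{\{u=h\}}(dd^cu)^n={\bf 1}_{\{u=h\}}(dd^ch)^n$. The point is that at a contact point $x_0$ where $u(x_0)=h(x_0)$, the function $u-h$ has a local maximum, so morally $dd^c u\le dd^c h$ "at $x_0$", and one upgrades this pointwise heuristic to a measure identity by the Bedford–Taylor machinery. Concretely I would prove this by the following steps: (i) establish $\le$ via a balayage / maximum-principle argument; (ii) for $\ge$, fix $\delta>0$ and compare $u$ with $u_\delta:=\max(u,h-\delta)$; on the open set $\{u>h-\delta\}$ one has $u=u_\delta$, and $\{u=h\}\subset\{u>h-\delta\}$, so by locality of the non-pluripolar product ${\bf 1}_{\{u=h\}}(dd^cu)^n={\bf 1}_{\{u=h\}}(dd^cu_\delta)^n$; (iii) show ${\bf 1}_{\{u=h\}}(dd^cu_\delta)^n\ge{\bf 1}_{\{u=h\}}(dd^ch)^n$ using the fact that $u_\delta\ge h-\delta$ with equality precisely where $u\le h-\delta$, together with a convergence argument as $\delta\to0$ and the continuity of $(dd^c\cdot)^n$ along such monotone families (Bedford–Taylor); integration by parts against test functions supported near the contact set, exploiting that $dd^ch$ has bounded coefficients, makes the limit rigorous. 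Globalizing, the manifold case follows by writing $\theta=dd^c g$ in local charts with $g$ smooth, so $\theta+dd^c\varphi=dd^c(g+\varphi)$ and $\theta+dd^c f=dd^c(g+f)$ with $g+f$ still $C^{1,\overline1}$, and patching via a partition of unity together with the local nature of both sides of the claimed identity.

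The main obstacle, I expect, is step (iii): making precise the passage from the pointwise "second derivatives decrease at a contact point" to the measure inequality, in the presence of only $L^\infty$ (not continuous) second-order data for $f$. The delicate point is that $\theta_f^n=(dd^c(g+f))^n$ must be interpreted as the Bedford–Taylor Monge–Amp\`ere measure of a bounded psh-like function with $L^\infty$ complex Hessian, and one needs the quasi-continuity of $u$ and the locality of the non-pluripolar product to restrict attention to the contact set; convergence $u_\delta\downarrow u$ as $\delta\to\infty$ and $u_\delta\uparrow$ suitably as $\delta\to0$ must be monitored carefully so that no mass escapes to the pluripolar set or leaks across the boundary of $\{u=h\}$. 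A secondary technical issue is ensuring the reduction to bounded $\varphi$ is harmless, i.e. that cutting $\varphi$ does not alter $\theta_\varphi^n$ on the contact set — this is handled by plurifine locality since the cut only changes $\varphi$ where it is very negative, away from $\{\varphi=f\}$.
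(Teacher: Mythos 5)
There is a genuine gap, and it sits exactly where you suspect: your step (iii) is not an auxiliary technicality but is essentially the theorem itself, restated for a bounded potential, and no argument is offered for it. The truncation $u_\delta:=\max(u,h-\delta)$ cannot serve as the regularization: since $h$ is only $C^{1,\overline{1}}$ and not (quasi-)psh, $\max(u,h-\delta)$ is in general not psh, so $(dd^cu_\delta)^n$ is not even a well-defined Bedford--Taylor measure; and even if one repairs this by replacing $h-\delta$ with a psh obstacle, one still has $u_\delta\le h$ with $\{u=h\}\subseteq\{u_\delta=h\}$ and no gain in regularity beyond boundedness, so the claimed inequality ${\bf 1}_{\{u=h\}}(dd^cu_\delta)^n\ge{\bf 1}_{\{u=h\}}(dd^ch)^n$ is the same unproven statement. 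Worse, the pointwise heuristic you invoke points the wrong way: at a contact point $u-h$ has a maximum, so formally $dd^cu\le dd^ch$ there, which would yield the \emph{easy} inequality ${\bf 1}_{\{u=h\}}(dd^cu)^n\le{\bf 1}_{\{u=h\}}(dd^ch)^n$ (this is the known direction, cf.\ \eqref{support1}). The hard direction $\ge$ requires a.e.\ \emph{second-order agreement} on the contact set, i.e.\ that at a.e.\ point of $\{u=h\}$ the full real Hessians of $u$ and $h$ coincide. That fact (a consequence of the Kinderlehrer--Stampacchia lemma applied to $u-h$) is only available when \emph{both} functions are $W^{2,p}$, which a general psh $u$ is not; for such $u$ the measure $(dd^cu)^n$ is not computed by a pointwise Hessian at all.

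The missing idea is therefore a regularization of $\varphi$ that preserves the contact set while producing $W^{2,p}$ competitors. The paper does this globally: it takes smooth $\phi_j\downarrow\varphi$ and sets $\psi_j=P_\theta(\phi_j,f)$, which is $C^{1,\overline{1}}$ by the Berman/Darvas--Rubinstein regularity theorem for rooftop envelopes (a global obstacle-problem result with no purely local Bedford--Taylor substitute); for these one gets $\theta_{\psi_j}^n\ge{\bf 1}_{\{\varphi=f\}}\theta_f^n$ by the a.e.\ second-order argument, and then passes to the limit $\psi_j\downarrow\varphi$ via a convergence-in-capacity argument with quasi-continuous cutoffs, finally invoking the known inequality $\le$ to upgrade to equality. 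Two further points your localization strategy does not address: for $f$ that is not quasi-psh even the inequality $\le$ needs an extra step (the paper inserts $P_{\theta+A\omega}(f)$ between $\varphi$ and $f$); and for merely pseudoeffective $\{\theta\}$ the non-pluripolar product of an unbounded $\varphi$ is not a local object on which a partition of unity acts harmlessly --- the paper instead proves the identity for $\theta+t\omega$ with $t$ large and descends to $t=0$ by observing that both sides are polynomials in $t$.
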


\noindent In particular, when the barrier function $f$ is in $ C^{1, \overline{1}}(X)$ we obtain the equality in \eqref{support} and \eqref{support2} in the general pseudoeffective case. Note that, at the best of our knowledge, the equality in \eqref{support2} is new even in the case of a K\"ahler class.

\medskip\noindent\textbf{Acknowledgements.} 
We thank Chinh Lu for his comments on a previous draft of this paper and in particular for sharing Remark \ref{example}.

\section{  Preliminaries}

Let $(X,\omega)$ be a compact K\"ahler manifold of dimension $n$ and fix $\theta$ a smooth closed real $(1,1)$-form.
A function $\varphi: X \rightarrow \mathbb{R}\cup \{-\infty\}$ is called \emph{quasi-plurisubharmonic} (qpsh for short) if locally $\varphi= \rho +u$, where $\rho$ is smooth and $u$ is a plurisubharmonic function. We say that $\varphi$ is $\theta$-plurisubharmonic ($\theta$-psh for short) if it is quasi-plurisubharmonic and $\theta_\varphi:=\theta+i\ddbar \varphi  \geq 0$ in the weak sense of currents on $X$. We let $\PSH(X,\theta)$ denote the space of all $\theta$-psh functions on $X$. The class $\{\theta\}$ is {\it pseudoeffective} if $\PSH(X,\theta)\neq \emptyset$ and it is {\it big} if there exists $\psi\in \psh(X,\theta)$ such that $\theta +i\ddbar \psi\geq \vep \omega$ for some $\vep>0$, or equivalently if $\PSH(X,\theta-\varepsilon\omega)\neq \emptyset$.   
 
%A potential $u \in \textup{PSH}(X,\theta)$ has {\it analytic singularities} if  it can be written locally as  $u(z) = c \log \sum_{j=1}^k |f_j(z)|^2 + h(z),$ where $c>0$, the $f_j's$ are holomorphic functions and $h$ is  smooth. By the fundamental approximation theorem of Demailly \cite{Dem92}, if $\{\theta\}$ is big  there are plenty of $\theta$-psh functions with analytic singularities.  Following \cite{Bo04,BEGZ10} the ample locus of $\{\theta\}$ (denoted by $\Amp(\theta)$) is defined to be the set of all $x\in X$ such that there exists a $\theta$-psh function on $X$ with analytic singularities, smooth in a neighborhood of $x$. It follows from \cite[Theorem 3.17 (ii)]{Bo04}  that there exists a $\theta$-psh function $\psi$ with analytic singularities such that $\Amp(\theta)$ is the open set on which $\psi$ is smooth and $\psi=-\infty$ on $X\setminus \Amp(\theta)$.   

When $\theta$ is non-K\"ahler, elements of $\textup{PSH}(X,\theta)$ can be quite singular, and we distinguish the potential with the smallest singularity type in the following manner:
$$V_\theta := \sup \{u \in \textup{PSH}(X,\theta) \textup{ such that } u \leq 0\}.$$
A function $\varphi \in \PSH(X,\theta)$ is said to have minimal singularities if it has the same singularity type as $V_{\theta}$, i.e., $|\varphi-V_\theta|\leq C$ for some $C>0$. Note that, given any $\theta$-psh function $\varphi $ we have $\varphi -\sup_X \varphi \leq V_\theta$.

Given $\theta^1,...,\theta^n$ closed smooth real $(1,1)$-forms representing pseudoeffective cohomology classes and $\varphi_j \in \textup{PSH}(X,\theta^j)$, $j=1,...n$, following the construction of Bedford-Taylor \cite{BT,BT82} in the local setting, it has been shown in \cite{BEGZ10} that the sequence of positive measures
\begin{equation}\label{eq: k_approx_measure}
{\bf 1}_{\bigcap_j\{\varphi_j>V_{\theta^j}-k\}}\theta^{1}_{\max(\varphi_1, V_{\theta^1}-k)}\wedge \ldots\wedge \theta^n_{\max(\varphi_n, V_{\theta^n}-k)}
\end{equation}
has total mass (uniformly) bounded from above and is non-decreasing in $k \in \Bbb R$, hence converges weakly to the so called \emph{non-pluripolar product} 
\[
\theta^1_{\varphi_1 } \wedge\ldots\wedge\theta^n_{\varphi_n }.
\]
The resulting positive measure does not charge pluripolar sets. In the particular case when $\varphi_1=\varphi_2=\ldots=\varphi_n=\varphi$ and $\theta^1=...=\theta^n=\theta$ we will denote by $ \theta_{\varphi}^n$ the non-pluripolar Monge-Amp\`ere measure of $\varphi$. As a consequence of Bedford-Taylor theory it can be seen that the measures in \eqref{eq: k_approx_measure} all have total mass less than $\int_X \theta^1_{V_{\theta^1}}\wedge \ldots \wedge \theta^n_{V_{\theta^n}}$, in particular, after letting $k \to \infty$ 
$$\int_X \theta^1_{\varphi_1}\wedge \ldots \wedge \theta^n_{\varphi_n}\leq \int_X \theta^1_{V_{\theta^1}}\wedge \ldots \wedge \theta^n_{V_{\theta^n}}.$$
We recall that the \emph{plurifine topology} is the weakest topology for which qpsh functions are continuous and that the non-pluripolar Monge-Amp\`ere measure satisfies a locality condition with respect to the plurifine topology \cite[Section 1.2]{BEGZ10}, i.e. if $\varphi_j,\psi_j,j=1,...,n$, are $\theta^j$-psh functions such that $\varphi_j =\psi_j$ on $U$ an open set in the plurifine topology, then 
\begin{equation}\label{plurifine}
{\bf{1}}_{U} \theta^1_{\varphi_1} \wedge ... \wedge \theta^n_{\varphi_n} ={\bf{1}}_{U} \theta^1_{\psi_1} \wedge ... \wedge \theta^n_{\psi_n}.
\end{equation}
We also note that sets of the form $\{\varphi<\psi\}$, where $\varphi, \psi$ are qpsh functions, are open in the plurifine topology. 

%In fact it was recently proved in \cite[Theorem 1.2]{WN17} that for any $u,v \in \textup{PSH}(X,\theta)$  the following monotonocity property holds for the total masses:
%$$v \preceq u \Longrightarrow \int_X \theta_v^n \leq \int_X \theta_u^n.$$\\
In the following we are going to work with some well known envelope constructions:
$$P_\theta(f) , \ P_\theta(f_1,\ldots, f_k), \ P_\theta[\varphi](f),  \ P_\theta[\varphi].$$ 
Given $f,f_1,\ldots f_k $ functions on $X$ bounded from above, we consider the ``rooftop envelopes'' 
$$ P_\theta(f):=(\sup\{v \in \textup{PSH}(X,\theta), \ v \leq f) \})^*$$
and 
$$P_\theta(f_1, \ldots, f_k ):=P_\theta(\min(f_1,\ldots , f_k))=(\sup\{v \in \textup{PSH}(X,\theta), \ v \leq \min(f_1, \ldots f_k) \})^*.$$
Then, given a $\theta$-psh function $\varphi$, the above procedure allows us to introduce
$$P_\theta[\varphi](f) := \Big(\lim_{C \to +\infty}P_\theta(\varphi+C,f)\Big)^*.$$
Note that by definition we have $ P_\theta[\varphi](f) = P_\theta[\varphi](P_{\theta}(f)). $ 
When $f =0$, we will simply write $P_\theta[\varphi]:=P_\theta[\varphi](0)$.
We emphasize that the functions $P_\theta(f)$, $P_\theta(f_1, \ldots, f_k )$ and $P_\theta[\varphi](f)$ are either $\theta$-psh or identically equal to $-\infty$. Moreover, observe that if $-C\leq f \leq C$, then $V_\theta-C \leq P_\theta(f) \leq V_\theta+C$; hence $P_\theta(f)$ is a well defined $\theta$-psh function. Morever, if we assume $\varphi\leq f$ the following sequence of inequalities holds:
\begin{equation}\label{disegEnv}
\varphi \leq P_\theta[\varphi](f)\leq P_\theta(f)\leq f.
\end{equation}
As in \cite[Section 2]{DR} we denote by $C^{1, \overline{1}}(X)$ the space of continuous function with bounded distributional laplacian w.r.t. $\omega$. Elliptic regularity and  Sobolev's embedding theorem imply that $C^{1, \overline{1}}(X)\subset W^{2,p}\subset C^{1, \alpha}$ for any $p\geq 1$ and $\alpha\in (0,1)$. Here $W^{2,p}$ denotes the Sobolev space of functions with all derivatives up to second order in $L^p$. By H\"older inequality, any polynomial having as coefficients the second derivatives of $f$ is in any $L^q$, $q\geq 1$. In particular $(\theta +dd^c f)^n=h \,\omega^n$ where $h\in L^q$.

%We now recall a well-known Lemma which we will use several times in the following:
%
%\begin{lemma} \label{semipositive}  
%Let $\varphi$ be a $\theta$-psh function which takes its maximum at a point $x_0 \in X$. Then the $(1,1)$-form $\theta(x_0)$ is semipositive. 
%\end{lemma}
%\noindent We refer to \cite[Lemma 2.7]{DDL1} or \cite{HL09} for a proof.\\
\section{Monge-Amp\`ere measures}
The starting point is given by the following Lemma that deeply relies on \cite[Theorem 3.4]{ Ber09} and on \cite[Theorem 2.5]{DR}:

\begin{lemma}\label{min}
Let $f_1, f_2\in C^{1,\bar{1}}(X)$. Then 
$P_{\omega}(f_1,f_2)$ is also $C^{1,\bar{1}},$ and for $i= 1,2$  the functions $f_i$ and   $P_{\omega}(f_1,f_2)$ are equal up to second order at almost every point  on the set  $\{ P_{\omega}(f_1,f_2)=f_i  \}$. In particular, the functions $f_1, f_2, P_{\omega}(f_1,f_2)$ are equal up to second order at almost every point on the set $\{ P_{\omega}(f_1,f_2)=f_1 = f_2  \}$. In particular 
 \[ {\bf 1}_{ \{ P_{\omega}(f_1,f_2)  = f_1 = f_2 \} } \omega_{f_1}^n  = {\bf 1}_{ \{ P_{\omega}(f_1,f_2)  = f_1 = f_2 \} } \omega_{f_2}^n.  \] 
Moreover the measures 
$$ {\bf 1}_{ \{ P_{\omega}(f_1,f_2)  = f_1 \} }  \omega_{f_1}^n, \quad  {\bf 1}_{ \{ P_{\omega}(f_1,f_2)  = f_2 \} }  \omega_{f_2}^n, \quad  {\bf 1}_{ \{ P_{\omega}(f_1,f_2)  = f_1 = f_2 \} }  \omega_{f_j}^n\quad (j=1,2)   $$
are positive and 
\begin{eqnarray*}
{\omega}_{P_{\omega}(f_1,f_2)}^n &=& {\bf 1}_{ \{ P_{\omega}(f_1,f_2)  = f_1 \} }  \omega_{f_1}^n+  {\bf 1}_{ \{ P_{\omega}(f_1,f_2)  = f_2 \} }  \omega_{f_2}^n -  {\bf 1}_{ \{ P_{\omega}(f_1,f_2)  = f_1 = f_2 \} } \omega_{f_1}^n  \\ 
&=& {\bf 1}_{ \{ P_{\omega}(f_1,f_2)  = f_1 \} }  \omega_{f_1}^n +  {\bf 1}_{ \{ P_{\omega}(f_1,f_2)  = f_2 \} }  \omega_{f_2}^n -  {\bf 1}_{ \{ P_{\omega}(f_1,f_2)  = f_1 = f_2 \} }  \omega_{f_2}^n
\end{eqnarray*}

\end{lemma}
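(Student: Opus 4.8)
The plan is to take the regularity and ``second order contact'' statements from \cite[Theorem 2.5]{DR} (which itself rests on \cite[Theorem 3.4]{Ber09}) essentially as black boxes, and then to reduce everything else to a short computation with $L^q$ densities and characteristic functions. Throughout write $P:=P_\omega(f_1,f_2)=P_\omega(\min(f_1,f_2))$ and, for $i=1,2$, set $S_i:=\{P=f_i\}$ and $S_{12}:=S_1\cap S_2=\{P=f_1=f_2\}$; these are closed sets since $f_1,f_2,P$ are continuous. By \cite[Theorem 2.5]{DR} the envelope $P$ belongs to $C^{1,\overline{1}}(X)$ and, for each $i$, the functions $P$ and $f_i$ are equal up to second order at almost every point of $S_i$. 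Intersecting these two assertions shows that $f_1,f_2,P$ all agree to second order at a.e.\ point of $S_{12}$, which gives the second ``in particular''.

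To turn second order contact into equalities of measures, recall that $C^{1,\overline{1}}(X)\subset W^{2,p}$ for every $p$, so each of $\omega_{f_1}^n,\omega_{f_2}^n,\omega_P^n$ is an absolutely continuous measure $h\,\omega^n$ whose density $h\in L^q$ is the pointwise Monge--Amp\`ere determinant of the a.e.-defined complex Hessian, and for the $\omega$-psh potential $P$ this coincides with its non-pluripolar product $\omega_P^n$. Hence whenever two such potentials agree to second order at a.e.\ point of a Borel set $E$, their Monge--Amp\`ere measures agree after multiplication by ${\bf 1}_E$. Taking $E=S_i$ gives ${\bf 1}_{S_i}\omega_P^n={\bf 1}_{S_i}\omega_{f_i}^n$, and taking $E=S_{12}$ gives ${\bf 1}_{S_{12}}\omega_{f_1}^n={\bf 1}_{S_{12}}\omega_P^n={\bf 1}_{S_{12}}\omega_{f_2}^n$, which is the claimed equality on $\{P=f_1=f_2\}$. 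Positivity follows because $P\in\PSH(X,\omega)\cap W^{2,p}$ forces $\omega+dd^cP\ge 0$ Lebesgue-a.e., hence $\omega+dd^cf_i\ge 0$ a.e.\ on $S_i$, so ${\bf 1}_{S_i}\omega_{f_i}^n\ge 0$; and ${\bf 1}_{S_{12}}\omega_{f_j}^n\ge 0$ because $S_{12}\subseteq S_j$.

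For the Monge--Amp\`ere identity, I would first observe that $\{P=\min(f_1,f_2)\}=S_1\cup S_2$: if $P(x)=f_1(x)$ then $P\le f_2$ forces $f_1(x)\le f_2(x)$, so $\min(f_1,f_2)(x)=f_1(x)=P(x)$, and symmetrically, while the reverse inclusion is obvious. Since $\min(f_1,f_2)$ is continuous and $\omega$ is K\"ahler (so $\{\omega\}$ is big), the result of \cite{GLZ17} gives that $\omega_P^n$ is carried by this contact set, i.e.\ $\omega_P^n={\bf 1}_{S_1\cup S_2}\omega_P^n$. Combining this with the elementary identity ${\bf 1}_{S_1\cup S_2}={\bf 1}_{S_1}+{\bf 1}_{S_2}-{\bf 1}_{S_{12}}$ and the previous paragraph yields
\[
\omega_P^n={\bf 1}_{S_1}\omega_P^n+{\bf 1}_{S_2}\omega_P^n-{\bf 1}_{S_{12}}\omega_P^n={\bf 1}_{S_1}\omega_{f_1}^n+{\bf 1}_{S_2}\omega_{f_2}^n-{\bf 1}_{S_{12}}\omega_{f_1}^n,
\]
and replacing the last term by ${\bf 1}_{S_{12}}\omega_{f_2}^n$ produces the second displayed formula.

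The genuine difficulty, I expect, lies entirely in the first step: justifying that \cite[Theorem 2.5]{DR} applies to the \emph{rooftop} barrier $\min(f_1,f_2)$ — which in general does not belong to $C^{1,\overline{1}}(X)$, being only ``$C^{1,\overline{1}}$ from above'' by each $f_i$ — and that the non-pluripolar $\omega_P^n$ of the resulting $W^{2,p}$ envelope really is the pointwise determinant measure. Once the $C^{1,\overline{1}}$ regularity and the a.e.\ second order contact on each $S_i$ are granted, all the remaining assertions are routine bookkeeping with characteristic functions and $L^q$ densities.
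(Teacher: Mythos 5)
Your proposal follows essentially the same route as the paper's proof: $C^{1,\overline{1}}$ regularity of $P:=P_\omega(f_1,f_2)$ from \cite[Theorem 2.5]{DR}, a.e.\ second-order contact on the coincidence sets, the fact that $\omega_P^n$ is carried by $\{P=\min(f_1,f_2)\}=\{P=f_1\}\cup\{P=f_2\}$, and inclusion--exclusion of characteristic functions. The bookkeeping with $L^q$ densities, the identification $\{P=\min(f_1,f_2)\}=S_1\cup S_2$, the intersection argument on $S_{12}$, and the positivity claims are all correct (the paper obtains positivity from the measure equalities on $\{f_1<f_2\}\cap S_1$, $\{f_1>f_2\}\cap S_2$ and $S_{12}$, which amounts to the same computation).

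The one genuine soft spot is that you attribute the a.e.\ second-order contact on $\{P=f_i\}$ to \cite[Theorem 2.5]{DR}. That theorem gives the $C^{1,\overline{1}}$ regularity of the rooftop envelope --- so your closing worry about whether it ``applies to the rooftop barrier $\min(f_1,f_2)$'' is moot, since the rooftop setting is exactly what it treats --- but it does not state the second-order contact, and this is precisely the technical heart of the lemma. The paper supplies the missing argument directly: set $\Psi_1=P-f_1$, $\Psi_2=P-f_2$, $\Psi_3=f_1-f_2$; each $\Psi_i$ is $C^1$ and lies in $W^{2,p}$, and one decomposes $\{\Psi_i=0\}$ into the set where $d\Psi_i\neq 0$ (a $C^1$ hypersurface, hence Lebesgue-null), the set where the a.e.-defined real Hessian fails to exist (null), and the set where $\Psi_i=0$, $d\Psi_i=0$ but the Hessian exists and is nonzero, which is null by \cite[p.~53, Lemma A.4]{KS} (a $W^{2,p}$ function vanishes to second order a.e.\ on its zero set). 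Once you insert this step, the rest of your argument goes through verbatim; your direct intersection $S_{12}\subseteq S_1\cap S_2$ even spares you the separate treatment of $\Psi_3$ that the paper carries out.
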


\begin{proof}

Observe that  $P_{\omega}(f_1,f_2)$ is a genuine $\omega$-psh function. Moreover, by \cite[Theorem 2.5]{DR} the function $P_{\omega}(f_1,f_2)\in C^{1,\overline{1}}(X)$, hence the measures $\omega_{f_1}^n$, $ \omega_{f_2}^n$ and $\omega_{P_{\omega}(f_1,f_2)}^n$ are absolutely continuous with respect to the Lebesgue measure. We set 
$\Psi_1  = P_{\omega}(f_1,f_2) - f_1, \Psi_2 = P_{\omega}(f_1,f_2) - f_2 $ and $\Psi_3 = f_1 - f_2.$  Since the functions $\Psi_i$ are $C^1$ the sets $A_i =  \{ \Psi_i = 0,  d \Psi_i \neq 0 \}$ are real hypersurfaces and they have Lebesgue measure zero. Since $\Psi_i \in W^{2,p}$, the set $B_i$ where the real hessian matrix of  $\Psi_i$ does not exists also have Lebesgue measure zero. Finally by \cite[page 53, Lemma A4]{KS}, the set $C_i$  where $\Psi_i = 0, d \Psi_i = 0 $ and  the real hessian matrix of $\Psi_i$  exists but it is non zero, has Lebesgue measure zero as well.  
Then for $1 \leq i \leq 3,$ the function $\Psi_i$ is zero up to order two almost everywhere on the set 
$\{ \Psi_i = 0 \}.$ 
 Furthermore by \cite[Corollary 9.2]{BT82}  the measure 
$ \omega_{P_{\omega}(f_1,f_2)}^n$ is supported on the set $\{ \Psi_1 = 0 \} \cup \{ \Psi_2 = 0 \}.$ Set $U_1 = \{ f_1 < f_2 \}, U_2 = \{ f_1 > f_2 \},$ and note that$U_1$ and $U_2$ are open sets such that $U_1 \cap \{ \Psi_2 = 0 \} = \emptyset$ and $ U_2 \cap \{ \Psi_1 = 0 \} = \emptyset$. Then we can argue that 
$$ 
{\bf 1}_{ U_1 \cap  \{ \Psi_1 = 0\} } \omega_{P_{\omega}(f_1,f_2)}^n = {\bf 1}_{U_1 \cap  \{ \Psi_1 = 0 \} } \omega_{f_1}^n, \quad  {\bf 1}_{U_2 \cap \{ \Psi_2 = 0 \} }\omega_{P_{\omega}(f_1,f_2)}^n = {\bf 1}_{U_2 \cap \{ \Psi_2 = 0 \} } \omega_{f_2}^n.
$$
This implies that the measures $ {\bf 1}_{U_1 \cap  \{ \Psi_1 = 0 \} } \omega_{f_1}^n$ and $   {\bf 1}_{U_2 \cap \{ \Psi_2 = 0 \} }\omega_{f_2}^n  $ are positive.\\
By the above argument, we can also garantee that at almost every point of the set  
$(\{ \Psi_1 = 0 \} \cup \{ \Psi_2 = 0 \}) \cap \{ \Psi_3 = 0 \} = \{ P_{\omega}(f_1,f_2) = f_1 = f_2 \} $ the functions $f_1, f_2$ and $P_{\omega}(f_1,f_2)$ coincide up to order two. Therefore we also have 
$$ {\bf 1}_{ \{ P_{\omega}(f_1,f_2) = f_1 = f_2 \}} \omega_{P_{\omega}(f_1,f_2)}^n = {\bf 1}_{ \{ P_{\omega}(f_1,f_2) = f_1 = f_2 \}} \omega_{f_1}^n  =  {\bf 1}_{ \{ P_{\omega}(f_1,f_2) = f_1 = f_2 \}} \omega_{f_2}^n, $$  
 and in particular it follows that, for any $j=1,2$, the measure $ {\bf 1}_{ \{ P_{\omega}(f_1,f_2)  = f_1 = f_2 \} } \omega_{f_j}^n$ is positive.
 Combining all the above equalities we get that for any $j=1,2$,
 \begin{align*}
 &\omega_{P_{\omega}(f_1,f_2)}^n \\
 &= {\bf 1}_{U_1 \cap  \{ \Psi_1 = 0 \} } \omega_{f_1}^n+  {\bf 1}_{U_2 \cap \{ \Psi_2 = 0 \} }\omega_{f_2}^n +{\bf 1}_{ \{ P_{\omega}(f_1,f_2)  = f_1 = f_2 \} } \omega_{f_j}^n\\
 &={ \bf 1}_{  \{ \Psi_1 = 0 \} } \omega_{f_1}^n \!-\! {\bf 1}_{ \{ P_{\omega}(f_1,f_2)  = f_1 = f_2 \} } \omega_{f_1}^n\! + \! {\bf 1}_{\{ \Psi_2 = 0 \} }\omega_{f_2}^n\! -\! {\bf 1}_{ \{ P_{\omega}(f_1,f_2)  = f_1 = f_2 \} } \omega_{f_2}^n\!+\!{\bf 1}_{ \{ P_{\omega}(f_1,f_2)  = f_1 = f_2 \} } \omega_{f_j}^n.
  \end{align*}
\end{proof}

\begin{theorem}\label{main} 
Let $ \theta^1,  \ldots, \theta^n $ be  smooth closed real $(1,1)$-forms  on $X$ such that  the cohomology classes  
$ \{ \theta^1 \}, \ldots, \{ \theta^n\} $ are  pseudoeffectives.  For $1 \leq i \leq n$, let $\varphi_i $ be a $\theta^i$-psh function and $f_i$ be a $C^{1,\bar{1}}$ function on $X$ such that $\varphi_i \leq f_i$.   Then the non-pluripolar product  
$\theta^1_{ \varphi_1} \wedge  \ldots \wedge \theta^n_{ \varphi_n}$ satisfies the equality: 

\begin{equation}\label{eqma} {\bf 1}_{\bigcap_j\{ \varphi_j = f_j \} } \theta^1_{ \varphi_1} \wedge \ldots \wedge \theta^n_{ \varphi_n} ={\bf 1}_{\bigcap_j\{ \varphi_j = f_j \} } \theta^1_{ f_1}  \wedge \ldots \wedge \theta^n_{ f_n} \end{equation}
\end{theorem}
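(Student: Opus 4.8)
\textbf{Proof strategy for Theorem \ref{main}.} The plan is to reduce the general pseudoeffective, multi-potential statement to the single Kähler form $\omega$ with $C^{1,\bar1}$ barriers, where Lemma \ref{min} applies.

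First I would observe that the statement is local in the plurifine topology: since $\{\varphi_j=f_j\}$ and $\{\varphi_j<f_j\}$ are, up to a pluripolar set, expressible via the plurifine topology, and since by \eqref{plurifine} the non-pluripolar product is plurifine-local, it suffices to understand the measure on the contact set $\bigcap_j\{\varphi_j=f_j\}$. On this set each $\varphi_j$ agrees with $f_j$, so on a plurifine neighborhood I want to replace $\theta^j_{\varphi_j}$ by $\theta^j_{f_j}$. The obstruction is that $f_j$ need not be $\theta^j$-psh, so $\theta^j_{f_j}$ is a priori only a signed/bounded measure (recall from the Preliminaries that $\theta^j+dd^cf_j=h_j\omega^n$ with $h_j\in L^q$), not a non-pluripolar Monge–Ampère measure; the locality principle \eqref{plurifine} as stated is for genuine $\theta^j$-psh potentials. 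To fix this, I would use the fact that $f_j\in C^{1,\bar1}$ implies $\theta^j+dd^cf_j+C\omega\geq 0$ for some large $C$, so $f_j$ is $(\theta^j+C\omega)$-psh; one then works with the forms $\tilde\theta^j=\theta^j+C\omega$ and the genuine $\tilde\theta^j$-psh functions $f_j$, and at the end expands the multilinear products $\prod_j(\theta^j+C\omega)_{f_j}=\prod_j(\theta^j_{f_j}+C\omega)$ to recover a statement about $\theta^1_{f_1}\wedge\cdots\wedge\theta^n_{f_n}$. The comparison current $\theta^j_{f_j}$ is defined by this same expansion.

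Next, the heart of the argument: reduce to the case $\theta^j=\omega$ for all $j$ and to just two potentials. Since $f_j$ and $f_j+\text{(a smooth potential)}$ differ by smooth functions, and since adding a smooth closed form to $\theta^j$ only changes things by controlled smooth terms, one reduces (after choosing, for each $j$, a smooth $\rho_j$ with $\theta^j+dd^c\rho_j=\omega$ — possible only if $\{\theta^j\}=\{\omega\}$, so in general one instead uses $\theta^j\leq C\omega$ and interpolates, or one argues directly with general forms) to studying $\omega_{g}^n$-type measures. The cleanest route, and the one I would actually pursue, is: on the contact set work pointwise. By Lemma \ref{min} applied iteratively — first to the pair $(f_1,f_2)$ producing $P_\omega(f_1,f_2)\in C^{1,\bar1}$ which agrees with all three functions to second order a.e. on the triple contact set, then bootstrapping to $P_\omega(f_1,\ldots,f_n)$ — one gets that on $\bigcap_j\{g=f_j\}$ (where $g=P_\omega(f_1,\ldots,f_n)$) all the $f_j$ and $g$ have the same real Hessian a.e., hence $\omega_{f_j}^n=\omega_{f_k}^n$ a.e. there. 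This handles the ``$f$-side'' of \eqref{eqma}: all the $\theta^j_{f_j}$ restricted to the contact set agree with each other and with $\theta^j_{g}$.

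Finally I would connect the $\varphi_j$-side to the $f_j$-side. On the open-in-plurifine set $\bigcap_j\{\varphi_j=f_j\}$ — strictly, I would first pass to $\bigcap_j\{\varphi_j=f_j=P_{\theta^j}(f_j)\}$ or work with the relevant contact set where everything collapses — the potentials $\varphi_j$ and $f_j$ coincide, so \emph{formally} $\mathbf 1_{\cap_j\{\varphi_j=f_j\}}\,\theta^1_{\varphi_1}\wedge\cdots\wedge\theta^n_{\varphi_n}=\mathbf 1_{\cap_j\{\varphi_j=f_j\}}\,\theta^1_{f_1}\wedge\cdots\wedge\theta^n_{f_n}$ by plurifine locality, once the right-hand product has been given meaning via the $+C\omega$ trick and the expansion above; the a.e. Hessian equality from Lemma \ref{min} is what guarantees these expansions are consistent and that $\theta^j_{f_j}$ is a genuine positive measure on the contact set rather than merely a signed one. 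The main obstacle, as indicated, is the bookkeeping needed to make plurifine locality legitimate when one of the ``potentials'' ($f_j$) is not $\theta^j$-psh: every application of \eqref{plurifine} must be preceded by the shift to $(\theta^j+C\omega)$-psh representatives, the products re-expanded, and one must check that the cross terms ${\bf 1}_{\cap\{\varphi_j=f_j\}}\,\bigwedge(\text{mix of }\theta^j_{\varphi_j}\text{ and }\omega)$ also satisfy the desired equality — which they do, by the same locality argument applied with fewer ``active'' factors, giving an inductive structure on the number of $C\omega$ factors.
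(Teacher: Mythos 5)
There is a genuine gap at the heart of your proposal: the step where you pass from ``$\varphi_j$ and $f_j$ coincide on $\bigcap_j\{\varphi_j=f_j\}$'' to the equality of measures there via plurifine locality. The locality principle \eqref{plurifine} applies on sets that are \emph{open} in the plurifine topology, and the contact set $\{\varphi=f\}$ is plurifine \emph{closed} (it is the complement of $\{\varphi<f\}$), not open. Agreement of two potentials on a plurifine closed set does not imply agreement of their Monge--Amp\`ere measures there, and the paper's own Remark \ref{example} is exactly a counterexample to your mechanism: there $\varphi=0$ and the barrier $f$ are both continuous and $\omega$-psh, they agree on the contact set $\{f=0\}$, yet ${\bf 1}_{\{f=0\}}\omega^n\neq{\bf 1}_{\{f=0\}}\omega_f^n$. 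So no amount of ``$+C\omega$'' bookkeeping can rescue a direct locality argument; the $C^{1,\bar 1}$ hypothesis must enter in a more substantive way than making $f_j$ quasi-psh.

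What is missing is the entire analytic core of the argument connecting the singular $\varphi$ to the regular $f$. The paper does this (in the K\"ahler, single-potential case) by approximating $\varphi$ from above by smooth $\phi_j$, forming $\psi_j:=P_\theta(\phi_j,f)\searrow\varphi$, applying Lemma \ref{min} to get the one-sided bound $\theta_{\psi_j}^n\geq{\bf 1}_{\{\varphi=f\}}\theta_f^n$, and then passing to the limit using convergence in capacity of the truncations $\max(\psi_j,-C)$ together with the cutoffs $h_j^{C,\varepsilon}$; this yields only the global inequality $\theta_\varphi^n\geq{\bf 1}_{\{\varphi=f\}}\theta_f^n$, and the reverse inequality on the contact set is supplied by a separate result (\cite[Lemma 4.5]{DDL4}). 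The extension to pseudoeffective $\theta$ is then done by a polynomial-in-$t$ argument applied to $\theta+t\omega$, the extension to non-qpsh $f$ via the intermediary $P_{\theta+A\omega}(f)$ (this is where your second-order-agreement observation from Lemma \ref{min} is genuinely used, and that part of your proposal is sound), and the mixed case by polarization in $\lambda_1,\dots,\lambda_n$. Your proposal contains the right ingredients for the ``$f$-side'' comparison and a plausible polarization-type endgame, but without the approximation scheme, the capacity-convergence limit, and the reverse inequality from \cite{DDL4}, the bridge between $\theta_\varphi^n$ and $\theta_f^n$ on the contact set is not established.
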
 
\noindent The proof below is inspired by \cite{WN}.

\begin{proof}
{\bf Step 1}. We start proving the case when $\theta:=\theta^1=\theta^2=\ldots =\theta^n$ is a K\"ahler form, $\varphi:=\varphi_1=\varphi_2=\ldots =\varphi_n$ and $f:=f_1=f_2=\ldots =f_n$ are both $\theta$-psh functions. Let $\phi_j$ be a sequence of smooth functions on $X$ decreasing to $\varphi$ and define $\psi_j:= P_{\theta}(\phi_j,f)$. Note that, since $\varphi$ is $\theta$ -psh and $\varphi\leq \phi_j, f$, we have
\begin{equation} \label{mainineq}
 \varphi  \leq \psi_j \leq  \min(\phi_j,f) \leq f.
\end{equation}
In particular,
\begin{equation}\label{envelop}
\{ \varphi = f \} \subseteq \{\psi_j = \min(\phi_j,f)  \} \cap \{  \min(\phi_j,f) = f \}  =  \{ \psi_j = f \} \cap \{ \phi_j \geq  f \} .
 \end{equation}  
Moreover, thanks to \eqref{mainineq} we can infer that the functions $\psi_j$ are decreasing to $\varphi$ as $j$ goes to $+ \infty$.
From Lemma \ref{min} we then get 
\begin{equation}\label{ineq1}
\theta_{\psi_j}^n =  {\bf 1}_{ \{ \psi_j  = f \} } \theta_f^n +  {\bf 1}_{ \{ \psi_j  = \phi_j \} } \theta_{\phi_j}^n -   {\bf 1}_{ \{ \psi_j  = \phi_j=f \} } \theta_{\phi_j}^n  \geq  {\bf 1}_{ \{ \psi_j  = f \} } \theta_f^n \geq   {\bf 1}_{ \{ \varphi = f \} }\theta_f^n, 
\end{equation}
where the last inequality follows from \eqref{envelop}.\\
Fix $C>0 $ such that $\min f  >  -C$ and $g$  a non negative continuous function on $X$. By \cite[Theorem 4.26]{GZ17} we know that for any sequence of uniformly bounded quasi-continuous functions $\chi_j$ converging in capacity to a bounded quasi-continuous function $\chi$, the measure
 $\chi_j\theta_{\max( \psi_j,-C)}^n$ weakly converges  to $\chi\theta_{\max( \varphi,-C)}^n$ as $j $ goes to $+ \infty$.
 % In particular for any closed set $B\subseteq X,$
% $$\chi \theta_{\max( \varphi,-2C)}^n (B) \geq  \limsup_{j \rightarrow + \infty} \chi_j  \theta_{\max( \psi_j,-2C)}^n (B).$$ 
  Fix $\varepsilon>0$. We set
  $$h_j^{C, \varepsilon}:= \frac{\max(\psi_j+C,0)}{\max(\psi_j+C,0)+\varepsilon}, \quad h^{C, \varepsilon}:= \frac{\max(\varphi+C,0)}{\max(\varphi+C,0)+\varepsilon}$$ and we observe that $h_j^{C, \varepsilon}, h^{C, \varepsilon}$ are quasi-continuous (uniformly) bounded functions (with values in $[0,1]$) and that $h_j^{C, \varepsilon}$ converges in capacities to  $h^{C, \varepsilon}$. The last statement follows from the fact that 
  $$\tilde{h}_j^{C, \varepsilon}:= \frac{\max(\varphi+C,0)}{\max(\psi_j+C,0)+\varepsilon} \leq h_j^{C, \varepsilon}\leq \frac{\max(\psi_j+C,0)}{\max(\varphi+C,0)+\varepsilon}:= \hat{h}_j^{C, \varepsilon} $$
  and $\tilde{h}_j^{C, \varepsilon}, \hat{h}_j^{C, \varepsilon} $ are monotone sequences (increasing and decreasing, respectively) converging to $h^{C, \varepsilon}$.\\
 Moreover, since  $h_j^{C, \varepsilon}=0$ if  $\psi_j\leq -C$ and $h^{C, \varepsilon}=0$ if  $\varphi\leq -C$, by the locality of  the Monge-Amp\`ere measure with respect to the plurifine topology \cite[Section 1.2]{BEGZ10}, we have 
 $$ h_j^{C, \varepsilon} g  \, \theta_{\psi_j}^n  =  h_j^{C, \varepsilon} g \, {\theta}_{\max( \psi_j,-C)}^n, \quad {\rm and} \quad  h^{C, \varepsilon} g  \, \theta_\varphi^n  =  h^{C, \varepsilon} g \, {\theta}_{\max( \varphi,-C)}^n. $$
Combining all the above we get

 \begin{eqnarray*}
  \int_X g\, \theta_\varphi^n  & \geq  & \int_{ X}h^{C, \varepsilon}   g\, \theta_\varphi^n \\
  &=&  \int_{ X} h^{C, \varepsilon} g \, {\theta}_{\max( \varphi,-C)}^n\\ 
   & = & \lim_{j \rightarrow + \infty}  \int_{X }  h_j^{C, \varepsilon} g\, {\theta}_{\max( \psi_j,-C)}^n  \\
& =& \lim_{j \rightarrow + \infty}  \int_{ X } h_j^{C, \varepsilon}  g \,\theta_{\psi_j}^n  \\
&\geq &  \lim_{j \rightarrow + \infty}  \int_{ \{ \varphi = f \} } h_j^{C, \varepsilon} g \, \theta_f^n \\
&= &   \int_{ \{ \varphi = f \} } h^{C, \varepsilon}  g\,\theta_f^n
 \end{eqnarray*}
 where the last inequality follows from \eqref{ineq1} while the last identity follows from the fact that convergence in capacity implies $L^1$ -convergence \cite[Lemma 4.24]{GZ17}. Observe that since we choose $\min f  >  -C$, the functions $ h^{C, \varepsilon} >0$ on   $\{ \varphi = f \} $. Letting $\varepsilon\rightarrow 0$, the dominated convergence theorem gives that
 $$  \int_X g\, \theta_\varphi^n   \geq  \int_{ \{ \varphi = f \} } g\,\theta_f^n.$$
Since the above inequality holds for any non negative continuous function $g$, by Riesz' representation theorem \cite[Theorem 7.2.8]{Cohn} we then we derive the inequality between measures
\begin{equation}  \label{ineqma}
\theta_\varphi^n    \geq {\bf 1}_{ \{ \varphi = f \}} \theta_f^n.
 \end{equation}
 Then, by \cite[Lemma 4.5]{DDL4} we get the equality

\begin{equation}\label{eqma1}
{\bf 1}_{ \{ \varphi = f \}}  \theta_\varphi^n   = {\bf 1}_{ \{ \varphi = f \}}\theta_f^n .  
 \end{equation}
%Recall that, by assumption, both functions $\varphi$ and $f$ are $\theta$-psh and $f$, since it is bounded, has minimal singularities. Fix $\varepsilon>0 $. By the comparison principle \cite[Corollary 2.3]{BEGZ10}, 
%$$
%\int_{ \{  f - \varepsilon < \varphi \} } \theta_\varphi^n \leq  \int_{ \{ f   - \varepsilon < \varphi \} }\theta_f^n.
%$$
%Letting $\varepsilon \searrow 0$ and using the monotone convergence theorem we obtain
%\[ \int_{ \{ \varphi = f \} } \theta_\varphi^n  \leq  \int_{ \{   \varphi  = f \} } \theta_f^n, \] 
%which together with inequality \eqref{ineqma} gives the equality 
%\begin{equation}\label{eqma}
% {\bf 1}_{ \{ \varphi = f \}}  \theta_\varphi^n   = {\bf 1}_{ \{ \varphi = f \}}\theta_f^n .  
% \end{equation}
%
%\textcolor{red}{la mia domanda in rosso si puo risolvere cosi? dato che $\theta$ \`e kahler, $f$ \`e $A\theta$-psh per $A$ grande. quindi
%$$
%\int_{ \{  f - \varepsilon < \varphi \} } (A\theta+dd^c \varphi)^n \leq  \int_{ \{ f   - \varepsilon < \varphi \} }(A\theta+dd^c f)^n.
%$$ da cui
%$$ \int_{ \{ \varphi = f \} } (A\theta+dd^c \varphi)^n \leq  \int_{ \{   \varphi  = f \} }  (A\theta+dd^c f)^n$$
%Ora, sull'insieme $\{\varphi = f  \}$ la funzione $f$ \`e $\theta-psh$ (Lemma 2.1), quindi usando la multilinearita otteniamo que per ogni $j$
%$$ \int_{ \{ \varphi = f \} } (A-1)^j\theta^j \wedge (\theta+dd^c \varphi)^{n-j} \leq  \int_{ \{   \varphi  = f \} }   (A-1)^j\theta^j \wedge (\theta+dd^c f)^{n-j}$$
%}

\noindent {\bf Step 2.} The next step is to prove the equality in \eqref{eqma1} when $\theta$ is merely pseudoeffective and not necessarily K\"ahler. Also, we assume that there exists $A > 0$ such that $\theta + A \omega$ is a K\"ahler form and  $f$ is   $(\theta +A \omega)$-psh. Observe that, since $\varphi $ is $\theta$-psh function, then $\varphi$ is also $\theta + t \omega$-psh, for $t \geq 0.$
% Let $U$ be a small open coordinate set of $X,$ let $\tau,\rho \in C^\infty (U)$ be the potentials of $\theta$ and $\omega,$ respectively. Since the coefficients of the  Levi form of 
%$ \eta_1 + f_1$ are uniformly bounded in $U,$ then so is the quadratic Levi form on the sphere, in other words $\eta_1 + f_1 + \lambda \eta_2$ is strictly plurisubharmonic in $U$ for $\lambda$ large enough.   Therefore, since $X$ is compact,  there exists $t_0 \in \R $ such that for $t > t_0,$ the form $\theta + t \omega $ is K\"ahler and both functions $\varphi$ and $f_1$ are $\theta + t \omega$-psh. 
Let $g\in C^0(X, \R)$ and consider the function 
 \[ Q(t) := \int_{ \{ \varphi = f \} } g ( \theta+ t \omega + dd^c \varphi )^n - \int_{ \{ \varphi = f \} } g ( \theta + t \omega + dd^c f )^n \] 
 defined for $t \geq 0.$ Then by multilinearity of the non-pluripolar product and the multilinearity of the product of forms, it is clear that  $Q(t)$ is a polynomial in $t$ of the form:
\[ Q(t) = \sum_{j=0}^{n}  \left(   {n\choose j} \int_{ \{ \varphi = f \} } g \left(   ( \theta  + dd^c \varphi )^j  - ( \theta  + dd^c f )^j \right)  \wedge \omega^{n-j} \right)  t^{n-j}. \] 
Thanks to (\ref{eqma}) we can infer that for any $t>A$
$$ {\bf 1}_{ \{ \varphi = f \}}  (\theta+t\omega+dd^c \varphi)^n   = {\bf 1}_{ \{ \varphi = f \}}(\theta+t\omega+dd^c f)^n.   $$
This implies that the polynomial  $Q(t)$ is identically zero for $t > A,$ hence $Q(t) \equiv 0$. It the follows $Q(0) = 0.$ Since $g\in C^0(X, \R)$ is arbitrary we have the desired equality between measures.\\

\noindent {\bf Step 3.} We now prove equality (\ref{eqma1}) when $f\in C^{1, \overline{1}}$ and not necessarily qpsh. Choose $A > 0$ such that  
$\theta + A \omega$ is a K\"ahler form. Since  the function $\varphi$ is 
$(\theta + A\omega)$-psh, then 
\begin{equation}\label{ineq2} 
\varphi \leq P_{\theta + A \omega}(f) \leq f. \end{equation} 
In particular 
\begin{equation}\label{inclus2} 
\{  \varphi = f \} = \{ \varphi = P_{\theta + A \omega}(f)  \} \cap \{ P_{\theta + A \omega}(f) =  f \}.
\end{equation} 
Observe that, even if $P_{\theta + A \omega}(f)$ and $f$ are not $\theta$-psh  they are both $C^{1, \overline{1}}$ \cite[Theorem 2.5]{DR}. Lemma \ref{min} applied to  the functions $f_1 =  f_2 = f $ ensures that the functions $P_{\theta + A \omega}(f)$ and $f$ are equal up to second order at almost every point on the set  ${ \{ P_{\theta + A \omega} (f)  = f \}} $. Hence
\begin{equation}\label{eqP} 
{\bf 1}_{ \{ P_{\theta + A \omega} (f)  = f \}}  \theta_{P_{\theta + A\omega}(f)}^n   =     
 {\bf 1}_{ \{ P_{\theta + A \omega} (f)  = f \}}  { \theta_f }^n.      \end{equation}
Moreover, since $P_{\theta + A \omega}(f)$ is $(\theta + A \omega
)$-psh, Step 2 ensures that 

\begin{equation}\label{eqP2} 
{\bf 1}_{ \{ \varphi = P_{\theta + A \omega} (f) \}}  \theta_\varphi^n   =  {\bf 1}_{ \{  \varphi = P_{\theta + A \omega} (f)   \}}  \theta_{P_{\theta + A\omega}(f)}^n. \end{equation}
If we multiply (\ref{eqP2}) by $ {\bf 1}_{ \{  P_{\theta + A \omega} (f) = f  \}}$ and use (\ref{eqP}) and (\ref{inclus2}) we derive (\ref{eqma1}).
\\
\noindent {\bf Step 4.} We now  prove the general case.
Let $ \lambda  = (\lambda_1, \lambda_2, \ldots, \lambda_n) \in (\R^+)^n$. We set
$\theta^{ \lambda } :=\sum_j  \lambda_j \theta^j,$   $ \varphi_{ \lambda } := \sum_j \lambda_j \varphi_j$, $ f_{ \lambda } := \sum_j \lambda_j f_j.$ Observe that  $\varphi_\lambda$ is $\theta^\lambda$-psh and $f_\lambda$ is still $C^{1,\bar{1}}$ and that
\begin{equation}\label{inclusion1}
\bigcap_{j=1}^n \{ \varphi_j = f_j \} \subseteq \{  \varphi_\lambda=f_\lambda  \}.
 \end{equation}
By Step 3 and \eqref{inclusion1} we get
$${\bf 1}_{\bigcap_j\{ \varphi_j = f_j \} }(\theta^\lambda_{\varphi_\lambda})^n = {\bf 1}_{\bigcap_j\{ \varphi_j = f_j \} }(\theta^\lambda_{f_\lambda})^n. $$
Using the multilinearity of the non-pluripolar product we get an identity between two polinomials in the variables $\lambda_1, \ldots, \lambda_n$. Comparing the coefficients of $\lambda_1 \cdots \lambda_n$ we obtain
$$ {\bf 1}_{\bigcap_j\{ \varphi_j = f_j \} } \theta^1_{\varphi_1} \wedge \ldots \wedge \theta^n_{\varphi_n} = {\bf 1}_{\bigcap_j\{ \varphi_j = f_j \} } \theta^1_{f_1} \wedge \ldots \wedge \theta^n_{f_n}.$$
\end{proof}

\begin{remark}\label{example}
One can not expect Theorem \ref{main} to hold when the barrier function $f$ is singular. The following counterexample shows indeed that \eqref{eqma} does not hold when $f$ is merely continuous.\\
Let $\mathbb{B}\subset X$ be a small open
ball and let $\rho$ be a smooth potential such that $\omega=dd^c \rho$ in a neighbourhood of $\overline{\mathbb{B}}$. We solve the Dirichlet problem $$(dd^c(\rho+v))^n=0\quad {\rm in }\; \mathbb{B}, \qquad v|_{\partial \mathbb{B}}=0.  $$
Since the boundary data is continuous, \cite[Proposition 1.6 and Corollary 1.17]{GZ17} garantees the existence of a continuous solution $v\geq 0$ which is $\omega$-psh in $\mathbb{B}$. We then define 
$$f:=
\begin{cases}
v \quad {\rm in }\; \mathbb{B}\\
0 \quad {\rm in }\; X\setminus \mathbb{B}.
\end{cases}
$$
By construction $f$ is a continuous $\omega$-psh function and $f\geq 0$. On the other hand we observe that
$$\int_{X\setminus \mathbb{B}} \omega_f^n = \int_{X} \omega_f^n = \int_{X} \omega^n > \int_{X\setminus \mathbb{B}} \omega^n.$$
Since $\{f=0\}\subseteq X\setminus \mathbb{B}$, we then deduce that the two measures ${\bf 1}_{\{f=0\}} \omega^n$ and ${\bf 1}_{\{f=0\}} \omega_f^n$ can not coincide.
\end{remark}
\begin{coro} \label{envelopes} 
Let $\varphi\in \psh(X, \theta)$ and $f\in C^{1, \overline{1}}(X)$ be such that $\varphi\leq f$. We have:
\begin{itemize}
\item[i)] $ \theta_{ P_{\theta}(f)}^n = { \bf 1}_{ \{ P_{\theta}(f)  = f \}} \theta _f^n $.
 \item[ii)]  ${\theta}_{P[\varphi](f)}^n=  { \bf 1}_{ \{ P[\varphi](f) = f \}} \theta_f^n$.
  \item [iii)]
$ {\bf{1}}_{\{ \varphi = P_{\theta}(f) \}} \ \theta_{ P_{\theta}(f)}^n =   {\bf{1}}_{\{ \varphi = P_{\theta}(f) \}} \,\theta_\varphi^n \;$  and   $  \;{\bf{1}}_{\{ \varphi =  P_{\theta}[\varphi](f)  \}}\, \theta_{P_{\theta}[\varphi](f) }^n =   {\bf{1}}_{\{ \varphi =  P_{\theta}[\varphi](f)  \}}\, \theta_{\varphi}^n. $ Moreover, the measure $\theta_\varphi^n$ is supported on the set $\{\varphi=f\}\cup \{\varphi <P_\theta[\varphi] (f)\}$.

\item[iv)]  The following conditions are equivalent: 
    \begin{enumerate}
  \item[1)] $ {\bf 1}_{ \{ \varphi < P_\theta[\varphi](f) \} }   {\theta}_{\varphi}^n = 0 $
  \item[2)]  either  $ \theta_\varphi^n = 0,$ or $ \varphi = P_\theta[\varphi](f)$.
     \end{enumerate} 
\item[v)] Assume  $ \theta_\varphi^n > 0 $. The set  $\{ P_\theta[\varphi](f) = f,\, \varphi< f\}$ has  measure zero w.r.t. $\theta_f^n$ if and only if $\varphi = P[\varphi](f).$ 
\end{itemize}
 
 \end{coro}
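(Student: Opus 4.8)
The plan is to deduce each item from Theorem \ref{main}, applied with the hypothesis function $\varphi$ replaced by the appropriate envelope, together with three facts available in the literature: the inequalities \eqref{support1} and the ``$\le$'' part of \eqref{support2} over pseudoeffective classes (\cite{DDL1}, \cite{DDL2}; for continuous barriers see also \cite{GLZ17}), the invariance $\int_X\theta_{P_\theta[\varphi]}^n=\int_X\theta_\varphi^n$ of the non-pluripolar mass under $P_\theta[\cdot]$, and the domination principle in relative full mass classes \cite{DDL2}. Recall from \eqref{disegEnv} that $\varphi\le P_\theta[\varphi](f)\le P_\theta(f)\le f$ and that, $f$ being bounded, $P_\theta(f)$ and $P_\theta[\varphi](f)$ are genuine $\theta$-psh functions. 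For (i) and (ii) I would set $\psi:=P_\theta(f)$ (resp.\ $\psi:=P_\theta[\varphi](f)$), a $\theta$-psh function with $\psi\le f$; Theorem \ref{main} then gives ${\bf 1}_{\{\psi=f\}}\theta_\psi^n={\bf 1}_{\{\psi=f\}}\theta_f^n$, while \eqref{support1} (resp.\ the ``$\le$'' of \eqref{support2}) gives $\theta_\psi^n\le {\bf 1}_{\{\psi=f\}}\theta_f^n$; restricting the latter to the plurifine-open set $\{\psi<f\}$ forces ${\bf 1}_{\{\psi<f\}}\theta_\psi^n=0$, and since $\psi\le f$ partitions $X$ into $\{\psi=f\}$ and $\{\psi<f\}$, adding the two gives $\theta_\psi^n={\bf 1}_{\{\psi=f\}}\theta_f^n$, i.e.\ (i) (resp.\ (ii)).

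\textbf{Item (iii).} Let $\psi$ denote either $P_\theta(f)$ or $P_\theta[\varphi](f)$; then $\varphi\le\psi\le f$, whence the elementary relations $\{\varphi=\psi\}\cap\{\psi=f\}=\{\varphi=f\}\subseteq\{\varphi=\psi\}$. Using (i) (resp.\ (ii)) and Theorem \ref{main},
\[ {\bf 1}_{\{\varphi=\psi\}}\theta_\psi^n={\bf 1}_{\{\varphi=\psi\}}{\bf 1}_{\{\psi=f\}}\theta_f^n={\bf 1}_{\{\varphi=f\}}\theta_f^n={\bf 1}_{\{\varphi=f\}}\theta_\varphi^n\le {\bf 1}_{\{\varphi=\psi\}}\theta_\varphi^n, \]
so $\theta_\varphi^n\ge {\bf 1}_{\{\varphi=\psi\}}\theta_\psi^n$; \cite[Lemma 4.5]{DDL4} (applied exactly as in Step 1 of the proof of Theorem \ref{main}) then upgrades this to ${\bf 1}_{\{\varphi=\psi\}}\theta_\varphi^n={\bf 1}_{\{\varphi=\psi\}}\theta_\psi^n$, which are the two identities in (iii). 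For the support statement I would write $X\setminus\big(\{\varphi=f\}\cup\{\varphi<P_\theta[\varphi](f)\}\big)=E$ with $E:=\{\varphi=P_\theta[\varphi](f)\}\cap\{P_\theta[\varphi](f)<f\}$ (using $\varphi\le P_\theta[\varphi](f)\le f$), and then, via $E\subseteq\{\varphi=P_\theta[\varphi](f)\}$, the identity just proved, and (ii),
\[ {\bf 1}_E\,\theta_\varphi^n={\bf 1}_E\,\theta_{P_\theta[\varphi](f)}^n={\bf 1}_E\,{\bf 1}_{\{P_\theta[\varphi](f)=f\}}\theta_f^n=0 \]
since $E\subseteq\{P_\theta[\varphi](f)<f\}$; hence $\theta_\varphi^n$ is carried by $\{\varphi=f\}\cup\{\varphi<P_\theta[\varphi](f)\}$.

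\textbf{Items (v) and (iv).} For (v), ``$\Leftarrow$'' is immediate ($\varphi=P_\theta[\varphi](f)$ makes the set empty). For ``$\Rightarrow$'', assume $\theta_\varphi^n>0$ and that $\{P_\theta[\varphi](f)=f,\ \varphi<f\}$ is $\theta_f^n$-null. By (ii), $\theta_{P_\theta[\varphi](f)}^n\big(\{\varphi<P_\theta[\varphi](f)\}\big)=\theta_f^n\big(\{P_\theta[\varphi](f)=f,\ \varphi<f\}\big)=0$, i.e.\ $\varphi=P_\theta[\varphi](f)$ holds $\theta_{P_\theta[\varphi](f)}^n$-a.e.; since $f$ is bounded, $P_\theta[\varphi](f)$ has the same singularity type as $P_\theta[\varphi]$ and hence lies in the relative full mass class $\mathcal{E}(X,\theta,P_\theta[\varphi])$, of total mass $\int_X\theta_{P_\theta[\varphi]}^n=\int_X\theta_\varphi^n>0$; the domination principle in this class then yields $P_\theta[\varphi](f)\le\varphi$ on $X$, i.e.\ $\varphi=P_\theta[\varphi](f)$. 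For (iv), $(2)\Rightarrow(1)$ is clear; for $(1)\Rightarrow(2)$, if $\theta_\varphi^n=0$ we are in alternative (2a), and otherwise, combining (1) with the second identity of (iii),
\[ \theta_\varphi^n={\bf 1}_{\{\varphi=P_\theta[\varphi](f)\}}\theta_\varphi^n={\bf 1}_{\{\varphi=P_\theta[\varphi](f)\}}\theta_{P_\theta[\varphi](f)}^n\le\theta_{P_\theta[\varphi](f)}^n, \]
and since these have equal, finite, positive mass, $\theta_\varphi^n=\theta_{P_\theta[\varphi](f)}^n$; then ${\bf 1}_{\{\varphi<P_\theta[\varphi](f)\}}\theta_{P_\theta[\varphi](f)}^n={\bf 1}_{\{\varphi<P_\theta[\varphi](f)\}}\theta_\varphi^n=0$ by (1), and (ii) (as in the proof of (v)) turns this into the $\theta_f^n$-nullity of $\{P_\theta[\varphi](f)=f,\ \varphi<f\}$, so (v) gives $\varphi=P_\theta[\varphi](f)$, alternative (2b).

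\textbf{Expected main obstacle.} The routine part is (i)--(iii): it is essentially a repackaging of Theorem \ref{main} with the known inequalities. The genuine difficulty lies in (iv)--(v), in passing from the measure-theoretic identity $\theta_\varphi^n=\theta_{P_\theta[\varphi](f)}^n$ (or its local version) to the pointwise statement $\varphi=P_\theta[\varphi](f)$: this rests on the domination principle in the relative full mass class $\mathcal{E}(X,\theta,P_\theta[\varphi])$ and on the mass invariance $\int_X\theta_{P_\theta[\varphi]}^n=\int_X\theta_\varphi^n$, both of which use in an essential way that $\theta_\varphi^n\ne 0$ (equivalently, that the ambient class has positive total mass) — which is precisely why $\theta_\varphi^n>0$ is assumed in (v). A minor technical point to verify is that \eqref{support1} and \eqref{support2}, stated in the literature for smooth barriers, hold also for $f\in C^{1,\overline{1}}$ over a merely pseudoeffective class; this follows by regularizing $f$ from above together with the continuity of $P_\theta(\cdot)$ (cf.\ \cite{GLZ17}).
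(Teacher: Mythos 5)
Your items (i)--(iii) are correct and follow essentially the paper's own route: Theorem \ref{main} combined with \cite[Proposition 2.16]{DDL1}, \cite[Theorem 3.8]{DDL2} and \cite[Lemma 4.5]{DDL4}. The one cosmetic difference is in (ii): the paper writes $P_\theta[\varphi](f)=P_\theta[\varphi](P_\theta(f))$ and applies \cite[Theorem 3.8]{DDL2} with the $\theta$-psh obstacle $P_\theta(f)$, then uses (i); this sidesteps the issue you yourself flag about whether the ``$\le$'' of \eqref{support2} is available for a $C^{1,\overline 1}$ barrier over a merely pseudoeffective class.

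There is, however, a genuine gap in (v), which (iv) inherits since you route (iv) through (v). Write $P:=P_\theta[\varphi](f)$. From the hypothesis and (ii) you correctly get $\theta_{P}^n\left(\{\varphi<P\}\right)=0$, i.e.\ $\varphi=P$ holds $\theta_{P}^n$-almost everywhere, and you then invoke the domination principle in $\mathcal{E}(X,\theta,P_\theta[\varphi])$. But the domination principle \cite[Proposition 3.11]{DDL2} concludes $v\le u$ from the vanishing of $\theta_u^n$ on $\{u<v\}$ --- the Monge--Amp\`ere measure of the function that is to end up on top. To conclude $P\le\varphi$ you therefore need $\theta_{\varphi}^n\left(\{\varphi<P\}\right)=0$, not the vanishing of $\theta_{P}^n$ on that set; and the comparison principle only yields $\int_{\{\varphi<P\}}\theta_{P}^n\le\int_{\{\varphi<P\}}\theta_\varphi^n$, which transfers the nullity in the wrong direction. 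The missing step --- and the actual content of the paper's proof of (v) --- is the conversion
$$\int_{\{\varphi<P\}}\theta_\varphi^n=\int_X\theta_\varphi^n-\int_{\{\varphi=P\}}\theta_\varphi^n=\int_X\theta_{P}^n-\int_{\{\varphi=P\}}\theta_{P}^n=\int_{\{\varphi<P\}}\theta_{P}^n=0,$$
which uses both the mass identity $\int_X\theta_\varphi^n=\int_X\theta_{P}^n$ (\cite[Remark 2.5]{DDL2}) and the equality ${\bf 1}_{\{\varphi=P\}}\theta_\varphi^n={\bf 1}_{\{\varphi=P\}}\theta_{P}^n$ that you established in (iii). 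Once this line is inserted your (v) closes, and for (iv) it is then simpler to apply the domination principle directly to hypothesis 1), which is already the statement $\theta_\varphi^n(\{\varphi<P\})=0$ with the correct measure (this is what the paper does).
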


\begin{proof}
Then statement in $(i)$ immediately  follows from Theorem \ref{main} and \cite[Proposition 2.16]{DDL1}. \\
Since $P_\theta[\varphi](f) = P_{\theta}[\varphi](P_\theta(f))$ and $\varphi\leq P_\theta(f)$, by \cite[Theorem 3.8]{DDL2} we have
$$\theta^n_{P_\theta[\varphi](f)} \leq  { \bf 1}_{\{  P_\theta[\varphi](f) = P_{\theta}(f)\}} \theta_{P_{\theta}(f)}^n = { \bf 1}_{ \{ P_\theta[\varphi](f) = f \}} \theta _f^n,$$
where in the last equality we used $(i)$. The opposite inequality follows from Theorem \ref{main}. This proves $(ii)$. \\Let's now prove $(iii)$. By $(i)$ and by Theorem \ref{main}, 
\begin{equation}\label{ineq3}
   {\bf{1}}_{\{ \varphi =  P_{\theta}(f)  \}}  \theta_{P_{\theta}(f)}^n =   {\bf{1}}_{\{  \varphi = P_{\theta}(f) = f \}} \theta_f^n =  {\bf{1}}_{\{ \varphi  = P_{\theta}(f) = f \}} \theta_\varphi^n \leq  {\bf{1}}_{\{ \varphi =  P_{\theta}(f)\}} \theta_\varphi^n.  
  \end{equation} 
  The other inequality is given by \cite[Lemma 4.5]{DDL4}. In particular the inequality in \eqref{ineq3} is in fact an equality, hence
  $$ {\bf{1}}_{\{   P_{\theta}(f)=\varphi <f  \}}  \theta_\varphi^n =0 .$$
Using $(ii)$ and Theorem \ref{main}, the same arguments of above give $$   {\bf{1}}_{\{ \varphi =  P_{\theta}[\varphi](f)  \}} \theta_{P_{\theta}[\varphi](f) }^n =   {\bf{1}}_{\{ \varphi =  P_{\theta}[\varphi](f)  \}} \theta_{\varphi}^n\qquad {\rm and }\qquad  {\bf{1}}_{\{   P_{\theta}[\varphi](f)=\varphi <f  \}}  \theta_\varphi^n =0.$$
 We now prove $(iv)$. If $\theta_{\varphi}^n=0$ or $\varphi = P_\theta[\varphi](f)$ then clearly $ {\bf 1}_{ \{ \varphi < P_\theta[\varphi](f) \} }   {\theta}_{\varphi}^n = 0 $. This proves that 2) implies 1). Viceversa we assume 1) and that $\int_X \theta_\varphi^n > 0.$ By \cite[Remark 2.5]{DDL2}
\begin{equation}\label{ineq4}
\int_X \theta_\varphi^n  = \int_X \theta_{P_\theta[\varphi](f)}^n=\int_X \theta_{P_\theta[\varphi]}^n.
\end{equation}
 The domination principle \cite[Proposition 3.11]{DDL2} gives the conclusion.\\
% prova iniziale di stefano:
%By Theorem \ref{main}, the statement in (ii) and the fact that $\varphi  \leq P_\theta[\varphi](f) \leq f$ we get
%$$
%\theta_\varphi^n =  { \bf 1 }_{ \{ \varphi = f \}}  \theta_f^n \leq  { \bf 1 }_{ \{ P_\theta[\varphi](f) = f \}} \theta_f^n = \theta_{P_\theta[\varphi](f)}^n. 
%$$
%On the other hand, by \cite[Remark 2.5]{DDL2}
%$$\int_X \theta_\varphi^n  = \int_X \theta_{P_\theta[\varphi](f)}^n=\int_X \theta_{P_\theta[\varphi]}^n.$$
%It then follows that $\theta_\varphi^n =\theta_{P_\theta[\varphi](f)}^n. $ By the uniqueness of solutions of Monge-Amp\`ere equations with prescribed singularities \cite[Theorem 4.8]{DDL3}, the functions $\varphi$ and $P[\varphi](f)$ differ by a constant function, i.e $P[\varphi](f)-\varphi=c$. Also, since  $\int_X \theta_\varphi^n > 0$ and  
%$\theta_\varphi^n =  { \bf 1 }_{ \{ \varphi = f \}} \theta_f^n $ we infer that  
%$$ \emptyset \neq  \{ \varphi = f \} \subseteq  \{ P[\varphi](f) = f \}.$$
%In particular, if $x \in  \{ \varphi = f \} $ then $\varphi(x) = P_\theta[ \varphi](f)(x) = f(x) > - \infty.$ Therefore $c=0$, or equivalently $ \varphi = P[\varphi](f).$ \\
We finally prove $(v).$  By assumption and by $(ii)$ we have
$$\int_{\{  \varphi < P_\theta[\varphi](f)  \}} \theta_{P_\theta[\varphi](f)}^n=\int_{\{ \varphi < P_\theta[\varphi](f)\} \cap \{  P_\theta[\varphi](f) = f\}} \theta_f^n=0.$$
Using \eqref{ineq4}, $(iii)$ and the above we get that
$$ \int_{\{  \varphi < P_\theta[\varphi](f)  \}} \theta_{\varphi}^n=\int_{X} \theta_{\varphi}^n- \int_{\{  \varphi = P_\theta[\varphi](f)  \}} \theta_{\varphi}^n =\int_{\{  \varphi < P_\theta[\varphi](f)  \}} \theta_{P_\theta[\varphi](f)}^n=0.$$  Once again the conclusion follows from the domination principle \cite[Proposition 3.11]{DDL2}.
%prima dimostrazione di Stef
%Observe that 
%$$\{ P_\theta[\varphi](f) = f\}= \{\varphi=f\} \cup \{ P_\theta[\varphi](f) = f, \, \varphi<f\}$$
%By Theorem (\ref{main}),  (ii) and using the assumption we get 
%$$ \theta_\varphi^n \geq  { \bf 1}_{ \{ \varphi  = f \}} \theta_f^n =  { \bf 1}_{ \{ P_\theta[\varphi](f) = f \}}\theta_f^n =  \theta_{P_\theta[\varphi](f)}^n.$$ The above inequality together with the fact that $\int_X  \theta_\varphi^n \leq \int_X  \theta_{P_\theta[\varphi](f)}^n$ give equality of the masses. We can then conclude that $\theta_\varphi^n =  \theta_{P_\theta[\varphi](f)}^n$. The same arguments to prove $(iv))$ give the conclusion.
\end{proof}

Next we give a formula relating the Monge-Amp\`ere measure of $P_{\theta}(f_1, \ldots,f_k)$ to the  $(n,n)$-forms  $\theta_{f_j}^n, \ 1 \leq j \leq k.$  
Set $R := \{  P_{\theta}(f_1, \ldots,f_k)= \min \{ f_1, \ldots, f_k \}) \},$ 
%the set $R$ is closed in $X.$ 
and let $\mathcal{I}$ be the family of all non empty subsets of $ \{1, \ldots, k \}.$ For 
$I \in \mathcal{I},$ we let 
$$ R_I =\{x\in X \,:\, f_{i} (x)= \min(f_1, \ldots, f_k)(x)= P_{\theta}(f_1, \ldots,f_k)(x) \; {\rm iff} \; i\in I\} . 
$$
Then $\{ R_I \}_{I \in \mathcal{I} }$ gives a partition of $R$ by Borel sets.

\begin{prop}  \label{mu_s}  
Let $f_1, \ldots f_k\in C^{1, \overline{1}}(X)$. For $I \in \mathcal{I}, I = \{ i_1, i_2, \ldots,i_r \} $  we have:
\[ {\bf 1}_{R_I} \theta_{ f_{i_1}}^n =  \ldots = {\bf 1}_{R_I}  \theta_ { f_{i_r} }^n = {\bf 1}_{R_I} \theta_{P_{\theta}(f_1, \ldots,f_k)}^n:=\mu_I . \] 
Moreover,    
 $ \theta_{P_{\theta}(f_1, \ldots,f_k)}^n= \sum_ {I \in \mathcal{I}}  \mu_I. $
\end{prop}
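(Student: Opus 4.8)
The plan is to reduce everything to Lemma \ref{min} (the two-function case) by an inductive/combinatorial argument, using the plurifine locality \eqref{plurifine} to localize the Monge-Ampère measure on the relevant contact sets. First I would record the basic structure: by \cite[Theorem 2.5]{DR} the envelope $g:=P_{\theta}(f_1,\ldots,f_k)$ is again $C^{1,\overline{1}}$ (after subtracting a local potential for $\theta$ one is in the setting of that theorem, or one iterates Lemma \ref{min} since $P_\theta(f_1,\ldots,f_k)=P_\theta(P_\theta(f_1,\ldots,f_{k-1}),f_k)$), so all the measures $\theta^n_{f_j}$ and $\theta^n_g$ are absolutely continuous with respect to Lebesgue measure. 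Exactly as in the proof of Lemma \ref{min}, the functions $\Psi_j:=g-f_j$ and $f_i-f_j$ lie in $W^{2,p}$, so by the Kiselman–Sadullaev type lemma \cite[page 53, Lemma A4]{KS} together with the $C^1$-regularity, each such difference vanishes to second order at almost every point of its zero set. Consequently, at almost every point of $R_I$ all the functions $g, f_{i_1},\ldots,f_{i_r}$ ($i_s\in I$) agree up to second order, which immediately gives the pointwise (a.e.) identity of densities
\[
{\bf 1}_{R_I}\theta^n_{f_{i_1}}=\cdots={\bf 1}_{R_I}\theta^n_{f_{i_r}}={\bf 1}_{R_I}\theta^n_{g}.
\]
This is the first displayed chain of equalities and defines $\mu_I$; the key input here is precisely the a.e. second-order-contact statement from Lemma \ref{min}, applied now to more than two functions simultaneously.

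For the decomposition $\theta^n_g=\sum_{I\in\mathcal I}\mu_I$, I would argue that $\theta^n_g$ is carried by $R=\bigcup_{I}R_I$ and that the $R_I$ form a Borel partition of $R$. That $\{R_I\}$ partitions $R$ is essentially the definition: for $x\in R$ the set $\{\,i : f_i(x)=\min_\ell f_\ell(x)=g(x)\,\}$ is a well-defined nonempty subset $I(x)\subseteq\{1,\ldots,k\}$, and $R_I=\{x : I(x)=I\}$; Borel measurability follows since the $f_i$ and $g$ are continuous. It therefore suffices to show $\theta^n_g({}^cR)=0$, i.e. that $\theta^n_g$ is supported on the contact set $R=\{g=\min(f_1,\ldots,f_k)\}$. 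This is Corollary \ref{envelopes}(i) (or rather its underlying statement) applied with barrier $f=\min(f_1,\ldots,f_k)$: one has $\theta^n_{P_\theta(f)}={\bf 1}_{\{P_\theta(f)=f\}}\theta^n_f$, and $\{P_\theta(f)=f\}=R$; alternatively invoke \cite{GLZ17} directly for the support statement, or \cite[Corollary 9.2]{BT82} locally as in Lemma \ref{min}. Once $\theta^n_g={\bf 1}_R\,\theta^n_g$ is known, summing over the partition,
\[
\theta^n_g={\bf 1}_R\,\theta^n_g=\sum_{I\in\mathcal I}{\bf 1}_{R_I}\theta^n_g=\sum_{I\in\mathcal I}\mu_I.
\]

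The main obstacle I anticipate is not any single deep estimate — all the hard analysis (the $C^{1,\overline{1}}$-regularity of the envelope and the a.e. second-order contact) is already packaged in Lemma \ref{min} and \cite{DR} — but rather getting the measurability and partition bookkeeping clean, in particular verifying that the a.e.-contact argument of Lemma \ref{min} genuinely survives when $r>2$ functions are forced to coincide simultaneously. This is handled by applying the argument to the finitely many pairwise differences $g-f_{i}$ and $f_i-f_{i'}$ for $i,i'\in I$ and intersecting the (finitely many) full-measure sets so obtained; since $R_I$ is contained in the intersection of the corresponding zero sets, a.e. point of $R_I$ is a simultaneous second-order contact point of all the $f_i$, $i\in I$, and of $g$. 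A minor point to check is that $R_I$ for distinct $I$ are genuinely disjoint and their union is all of $R$ and not more — both are immediate from continuity of the $f_i$ and $g$ and the pointwise definition of $I(x)$. I would present the two-function Lemma \ref{min} reasoning essentially verbatim, then the support statement, then the one-line summation.
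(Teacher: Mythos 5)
There is a genuine gap in the first half of your argument: the chain of equalities ${\bf 1}_{R_I}\theta^n_{f_{i_1}}=\cdots={\bf 1}_{R_I}\theta^n_{g}$, where $g:=P_{\theta}(f_1,\ldots,f_k)$, is derived from the claim that $g$ is $C^{1,\overline{1}}$, so that $\theta^n_g$ is absolutely continuous and the a.e.\ second-order contact argument of Lemma \ref{min} applies to the differences $g-f_i$. But the proposition is stated for a general pseudoeffective class $\{\theta\}$, whereas \cite[Theorem 2.5]{DR} and Lemma \ref{min} concern rooftop envelopes with respect to a \emph{K\"ahler} form $\omega$ (and iterating $P_\theta(f_1,\ldots,f_k)=P_\theta(P_\theta(f_1,\ldots,f_{k-1}),f_k)$ does not help, since each step would again need $\theta$ K\"ahler). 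For non-K\"ahler $\theta$ one has $g\leq V_\theta+C$, so $g$ inherits the singularities of $V_\theta$ and is in general unbounded; its non-pluripolar Monge-Amp\`ere measure need not be absolutely continuous with respect to Lebesgue measure, and the Kinderlehrer--Stampacchia argument cannot be run on $g-f_i$. (Your pairwise identities ${\bf 1}_{R_I}\theta^n_{f_{i_s}}={\bf 1}_{R_I}\theta^n_{f_{i_t}}$ among the barriers themselves are fine, since the $f_i$ genuinely are $C^{1,\overline{1}}$.) The missing idea is that this proposition is meant as a consequence of Theorem \ref{main}: $g$ is $\theta$-psh, $g\leq f_{i_h}$, and $R_I\subseteq\{g=f_{i_h}\}$ for every $i_h\in I$, so ${\bf 1}_{\{g=f_{i_h}\}}\theta^n_g={\bf 1}_{\{g=f_{i_h}\}}\theta^n_{f_{i_h}}$ and one restricts to $R_I$; that theorem is precisely the device that replaces second-order contact when $\varphi$ is a singular $\theta$-psh function.

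A secondary issue: for the support statement you first propose Corollary \ref{envelopes}(i) with barrier $f=\min(f_1,\ldots,f_k)$, but the minimum of $C^{1,\overline{1}}$ functions is only Lipschitz, so that corollary does not apply as stated. Your fallback is the right idea; the paper invokes \cite[Proposition 2.16]{DDL1} to get that $\theta^n_g$ is carried by $R$, after which your partition bookkeeping and the final summation $\theta^n_g=\sum_{I}\mu_I$ are correct and match the paper.
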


\begin{proof}

By \cite[Proposition 2.16]{DDL1} the measure $\theta_{P_{\theta}(f_1, \ldots,f_k)}^n$ is supported on the contact set $R.$ Moreover, for any $h=1, \dots, r$, we have $P_{\theta}(f_1, \ldots,f_k))  \leq f_{i_h}$ with equality on the set $R_I.$ The conclusion follows from Theorem \ref{main}. 
\end{proof}

\noindent{\sc Sorbonne Universit\'e}\\
{\tt eleonora.dinezza@imj-prg.fr}\vspace{0.1in}\\
\noindent {\sc Universit\'a di Roma TorVergata}\\
{\tt trapani@axp.mat.uniroma2.it}
\end{document}